\newtheorem{theorem}{Theorem}[section]
\newtheorem{lemma}[theorem]{Lemma}
\newtheorem{proposition}[theorem]{Proposition}
\newtheorem{example}[theorem]{Example}
\theoremstyle{definition}}
\theoremstyle{definition}}
\theoremstyle{definition}}
\numberwithin{equation}{section}
\begin{document}

\def\C{{\mathbb C}}
\def\N{{\mathbb N}}
\def\Z{{\mathbb Z}}
\def\R{{\mathbb R}}
\def\Q{{\mathbb Q}}
\def\K{{\mathbb K}}
\def\F{{\mathcal F}}
\def\H{{\mathcal H}}
\def\E{{\mathcal E}}
\def\B{{\mathcal B}}
\def\rr{{\mathcal R}}
\def\pp{{\mathcal P}}

\def\d{ \{\!\{ }
\def\c{ \}\!\} }

\def\epsilon{\varepsilon}
\def\kappa{\varkappa}
\def\phi{\varphi}
\def\leq{\leqslant}
\def\geq{\geqslant}
\def\slim{\mathop{\hbox{$\overline{\hbox{\rm lim}}$}}\limits}
\def\ilim{\mathop{\hbox{$\underline{\hbox{\rm lim}}$}}\limits}
\def\supp{\hbox{\tt supp}\,}
\def\dim{{\rm dim}\,}
\def\ssub#1#2{#1_{{}_{{\scriptstyle #2}}}}
\def\dimk{{\ssub{\dim}{\K}\,}}
\def\ker{\hbox{\tt ker}\,}
\def\im{\hbox{\tt im}\,}
\def\spann{\hbox{\tt span}\,}
\def\supp{\hbox{\tt supp}\,}
\def\deg{\hbox{\tt \rm deg}\,}
\def\bin#1#2{\left({{#1}\atop {#2}}\right)}
\def\summ{\sum\limits}
\def\maxx{\max\limits}
\def\minn{\min\limits}
\def\limm{\lim\limits}
\def\ootimes{\,{\text{$\scriptstyle\otimes$}}\,}
\def\oo{\otimes}

\def\lll{\langle}
\def\rrr{\rangle}
\def\dd{\delta}
\def\a{\alpha}
\def\b{\beta}
\def\g{\gamma}
\def\ai{$A_{\infty}$}
\def\l{\langle}
\def\r{\rangle}
\def\lor{k\langle\langle x,y \rangle\rangle}
\def\xd#1{\frac{\partial}{\partial x_{#1}}}

\def\C{{\mathbb C}}
\def\N{{\mathbb N}}
\def\Z{{\mathbb Z}}
\def\R{{\mathbb R}}
\def\PP{\cal P}
\def\p{\rho}
\def\phi{\varphi}
\def\ee{\epsilon}
\def\ll{\lambda}
\def\a{\alpha}
\def\bb{\beta}
\def\D{\Delta}
\def\dd{\delta}
\def\g{\gamma}
\def\rk{\text{\rm rk}\,}
\def\dim{\text{\rm dim}\,}
\def\ker{\text{\rm ker}\,}
\def\square{\vrule height6pt width6pt depth 0pt}
\def\epsilon{\varepsilon}
\def\phi{\varphi}
\def\kappa{\varkappa}
\def\strl#1{\mathop{\hbox{$\,\leftarrow\,$}}\limits^{#1}}

\vskip1cm

\title{ Golod-Shafarevich-Vinberg type theorems and finiteness conditions for potential algebras}
%Any quotient of
%Universal enveloping of positive Witt algebra has a polynomial growth}
%Universal enveloping of  Witt algebra has just infinite Gelfand-Kirillov dimension}

\author{Natalia Iyudu and Stanislav Shkarin}

%\date{}

\maketitle

\begin{abstract}
We  obtain a lower estimate for the Hilbert series of Jacobi algebras and their completions by providing analogue of the Golog-Shafarevich-Vinberg theorem for potential case. We especially treat non-homogeneous situation. This estimate allows to answer number of questions arising in the work of Wemyss-Donovan-Brown on noncommutative singularities and deformation theory. In particular, we prove that the only case when a potential algebra or its completion could be finite dimensional or of linear growth, is the case of two variables and potential having terms of degree three.

\end{abstract}

% We prove that in spite the universal enveloping of positive Witt %algebra $U(W_+)$ has an intermediate growth, any its qu

\small \noindent{\bf MSC:} \ \ 17A45, 16A22

\noindent{\bf Keywords:} \ \ Potential algebras, zero divisors, exact potential complex, quadratic algebras, Koszul algebras, Hilbert series, Golod-Shafarevich-Vinberg theorem.\normalsize

\section{Introduction \label{s1}}\rm

The Golod-Shafarevich theorem provides a lower estimate for the Hilbert series of an algebra given by generators and relations in terms of degrees of generators and defining relations \cite{Golod}. Vinberg generalised the theorem for  the case when relations are not homogeneous \cite{Vinb}.

We work here in the situation when relations are not arbitrary, but obtained as (noncommutative) derivatives of one polynomial, called the potential. Such algebras are called potential or Jacobi and they frequently appear in various contexts, for example, in physics. This additional constrain allows to improve the estimate, and as a consequence, obtain important results on possible dimensions, on conditions of finite-dimensionality, other finiteness conditions, on conditions of linearity of the growth of potential algebras, which are needed in the study of contraction algebras \cite{WD, WB}. Contraction algebras serve as a noncommutative invariants of curve contractions and appear to be, as shown by Van den Bergh,  potential algebras.
%ben conj
Thus, it is important to know conditions when potential algebras are finite dimensional or of linear growth.
We  provide an answer to these questions in the  theorem below.

%\subsection{Lower estimate for $P_A$, $A$ being a potential algebra}

The methods we develop and apply in this section work for many varieties of twisted potential algebras as well. We restrict ourselves to potential algebras for the sake of clarity.

Denote by $F_k$ the $k$th graded component of the potential $F$, and by $A^{(n)}=\K\l X\r / (I + J^n)$, where $A=\K\l X\r / I$ and $J^n$ is an ideal generated by all monomials of degree strictly bigger than $n$.

We also denote here by $P_A= \sum \dim A^{(n)}t^n$ the generating function of dimensions of truncated algebras $A^{(n)}$, in stead of usual Poincare series of dimensions of components of the filtration.

We obtain a lower estimate for the Hilbert series of $A$ by estimating this series $P_A$, which is obviously
componentwise smaller than the Hilbert series of $A$:
$$H_A \geq P_A \geq (1-t)^{-1}(1-nt+nt^{k-1}-t^k)^{-1}.$$

Note that the  series $P_A$ does coincide with the Hilbert series $H_{\bar A}$ of the completion $\bar A$ of algebra $A$.

Here we give two equivalent definitions of the completion of an ideal and of an algebra (\ref{WD}).

{\bf Definition.} Consider the decreasing sequence of ideals $I^{(n)}=I+J^n$, where $J^n$ as above is an ideal generated by monomials of degree strictly bigger than $n$:
$$I^{n} \supset I^{(n+1)} \supset ...$$
Call $\bar I = \cap I^{(n)} $ {\it a completion of the ideal} $I$, and corresponding algebra $\bar A= \K \l X \r/\bar I$, {\it the completion of an algebra } $A$.

In spite of what the term of completion of an algebra should naively suggest, we have $A \supset \bar A$, however for an ideal the situation is not counterintuitive: $\bar I \supset I$.

It is easy to see that we get the same algebra, if we take quotient of the formal power series $\K \l\l X \r\r$ by the ideal generated by $I$ in $\K \l\l X \r\r$.

{\bf Definition.} $\bar A =\K \l\l X \r\r / id_{\K \l\l X \r\r} I.$

One could find these definitions for example in \cite{WD}.

%The main objective of this section is to prove the following Theorem.
%~\ref{main-growth}.

 \begin{theorem}\label{main-growth}
 Let $F\in\K^{\rm cyc}\langle x_1,\dots,x_n\rangle$ be such that $F_0={\dots}=F_{k-1}=0$, and let $n,k\in\N$ be such that $n\geq 2$, $k\geq 3$ and $(n,k)\neq(2,3)$. Then $\bar A_F$ and thereof $A_F$ are infinite dimensional.

 Furthermore, $\bar A_F$ and $A_F$ have at least cubic growth if $(n,k)=(2,4)$ or $(n,k)=(3,3)$ with cubic growth being possible in both cases, and they have exponential growth otherwise.
\end{theorem}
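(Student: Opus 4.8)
The plan is to deduce everything from the fundamental estimate stated in the excerpt,
namely
$$
H_{A_F}\ \geq\ P_{A_F}\ \geq\ (1-t)^{-1}\,\bigl(1-nt+nt^{k-1}-t^k\bigr)^{-1},
$$
together with the fact that $P_{A_F}=H_{\bar A_F}$. Since the coefficientwise
inequality $H_{A_F}\geq P_{A_F}=H_{\bar A_F}$ shows that $A_F$ is at least as large as
$\bar A_F$ in every degree, it suffices to prove the lower bounds for $\bar A_F$; the
corresponding statements for $A_F$ follow immediately. So the whole problem reduces to a
question of \emph{analytic combinatorics}: understanding the growth rate of the
coefficients of the power series
$$
G_{n,k}(t)\ :=\ (1-t)^{-1}\,\bigl(1-nt+nt^{k-1}-t^k\bigr)^{-1}.
$$

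\textbf{Step 1: infinite-dimensionality.} First I would show that for every admissible pair
$(n,k)$ the series $G_{n,k}$ has infinitely many nonzero (indeed strictly positive)
coefficients, which forces $\dim\bar A_F=\infty$ and hence $\dim A_F=\infty$. The clean way
is to control the smallest positive real root $\rho$ of the key polynomial
$q(t)=1-nt+nt^{k-1}-t^k$ on $(0,1)$. One checks $q(0)=1>0$ while $q$ becomes negative for
$t$ slightly larger than $1/n$ (using $n\geq 2$, $k\geq 3$), so a root $\rho\in(0,1)$
exists, and near $\rho$ the factor $(1-nt+nt^{k-1}-t^k)^{-1}$ blows up, giving a genuine
pole. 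Because all operations preserve nonnegativity of the Taylor coefficients, the radius
of convergence of $G_{n,k}$ equals $\rho<1$, so the coefficients cannot be eventually zero;
this yields infinite dimension.

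\textbf{Step 2: distinguishing the growth types.} The decisive quantity is this same root
$\rho$. If $\rho<1$ is a pole of order one, the coefficients of $G_{n,k}$ grow like
$C\rho^{-m}$, i.e.\ \emph{exponentially}; this is the generic case. Polynomial (here
cubic) growth can occur only when the exponential factor degenerates, i.e.\ when $\rho=1$
is the dominant singularity and the pole there has a definite order. So the heart of the
argument is a careful root-analysis of $q(t)=1-nt+nt^{k-1}-t^k$: I would prove that for all
$(n,k)$ with $n\geq2$, $k\geq3$, $(n,k)\neq(2,3)$ \emph{except} the two boundary cases
$(2,4)$ and $(3,3)$, there is a root strictly inside the unit disc, forcing exponential
growth; and that precisely for $(n,k)\in\{(2,4),(3,3)\}$ all roots lie on $|t|\geq 1$ with
$t=1$ a multiple root, so that $(1-t)$ appears in $q$ to a suitable power and the combined
singularity of $G_{n,k}$ at $t=1$ produces exactly cubic (polynomial of degree two in the
coefficient index) growth. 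Concretely for $(3,3)$ one has $q(t)=1-3t+3t^2-t^3=(1-t)^3$, so
$G_{3,3}(t)=(1-t)^{-4}$, whose coefficients are $\binom{m+3}{3}\sim m^3/6$ — manifestly
cubic; the case $(2,4)$ I would handle by factoring out $(1-t)$ from $q$ explicitly and
verifying the remaining factor has no root in the closed unit disc other than contributions
keeping the total pole order at $t=1$ equal to four. That the lower bound is attained — that
cubic growth is genuinely \emph{possible} in these two cases — I would establish by
exhibiting an explicit potential $F$ (e.g.\ a suitable cubic or quartic) whose algebra
$A_F$ realizes this Hilbert series, so that the estimate is sharp.

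\textbf{The main obstacle} I expect is Step 2, specifically the uniform proof that
$q(t)=1-nt+nt^{k-1}-t^k$ has a root in the open unit disc for every admissible pair outside
the two exceptional ones. Treating all $(n,k)$ at once is delicate because $q$ is a genuine
family of polynomials of growing degree; I anticipate needing a monotonicity or
descent argument in $n$ and $k$ (showing $q$ evaluated at a fixed well-chosen point in
$(0,1)$ is negative, which pushes a root inside the disc) together with separate, hands-on
verification of the small boundary cases $(2,4)$, $(3,3)$, $(2,5)$, $(4,3)$ where the
inequalities are tight. Establishing sharpness — the realizability of cubic growth by an
actual potential — is a separate construction, but it is concrete rather than conceptually
hard. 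Everything else is a direct consequence of the Golod–Shafarevich–Vinberg-type
estimate already in hand.
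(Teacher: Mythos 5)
Your proposal follows the same route as the paper: the paper's entire proof of Theorem~\ref{main-growth} is the one-line reduction to Lemma~\ref{abc1} (the estimate $P_{\bar A_F}\geq(1-t)^{-1}(1-nt+nt^{k-1}-t^k)^{-1}$), after which everything is coefficient asymptotics of that rational function, exactly as you describe. Your Step 2 and the sharpness discussion (the Examples in the paper realize $H=(1-nt+nt^{k-1}-t^k)^{-1}$, hence cubic growth for $(3,3)$ and $(2,4)$) are correct.

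One slip to fix: in Step 1 you assert that $q(t)=1-nt+nt^{k-1}-t^k$ always has a root $\rho\in(0,1)$, obtained because ``$q$ becomes negative for $t$ slightly larger than $1/n$.'' This is false precisely in the two exceptional cases: for $(3,3)$ one has $q(t)=(1-t)^3>0$ on $(0,1)$, and for $(2,4)$ one has $q(t)=(1-t)^3(1+t)>0$ on $(0,1)$ (also note $q(1/n)=n^{2-k}-n^{-k}>0$, so the sign claim at $1/n$ is wrong in general). Your Step 2 already contains the correct statement, so this is an internal inconsistency rather than a fatal gap, but Step 1 as written does not establish infinite-dimensionality in the two cubic cases; there you must instead use the order-$4$ pole at $t=1$. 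The uniform root analysis you flag as the main obstacle is in fact immediate: $q(1)=0$ always, and writing $q(t)=(1-t)p(t)$ gives $p(t)=1+(1-n)(t+\dots+t^{k-2})+t^{k-1}$, so $p(1)=2-(n-1)(k-2)$. This is $0$ exactly for $(n,k)\in\{(2,4),(3,3)\}$ and strictly negative for every other admissible pair, which (with $p(0)=1>0$) places a root of $q$ in $(0,1)$ and yields exponential growth; in the two degenerate cases one checks directly that the pole of $G_{n,k}$ at $t=1$ has order $4$, giving growth $\sim Cm^3$.
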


%% According to this theorem, the only case when  a potential algebra can be finite dimensional or of linear growth is the case of two variables and potential  having terms of degree three.

 For $n,k,m\in\N$ such that $n\geq 2$ and $m\geq k\geq 3$, denote
$$
{\cal P}_{n,k}^{(m)}=\{F\in \K^{\rm cyc}\langle x_1,\dots,x_n\rangle:F_j=0\ \text{for $j<k$ and for $j>m$}\}.
$$
Clearly, ${\cal P}_{n,k}^{(m)}$ is a vector space and ${\cal P}_{n,k}^{(k)}={\cal P}_{n,k}$. Recall that for $j\in\Z_+$ and $F\in {\cal P}_{n,k}^{(m)}$, $A_F^{(j)}$ is the quotient of $A_F$ by the ideal generated by the monomials of degree $j+1$.

Then the main lemma about zero divisors state.

\begin{lemma}\label{abc} Let $n,k\in\N$, $n\geq 2$, $m\geq k\geq 3$ and $(n,k)\neq (2,3)$, then for generic  $F\in {\cal P}_{n,k}^{(m)}$,
%. Assume also that
 $x_1a\neq 0$ in $\bar A_{F}$ for every non-zero $a\in \bar A_{F}$.
 %Then for each $j\in\Z_+$, $x_1b\neq 0$ in $A_F^{(j+1)}$ for every $b\in \K\langle x_1,\dots,x_n\rangle$ such that %$b\neq 0$ in $A_F^{(j)}$.
\end{lemma}

Using this lemma and analogue of Golod-Shafarevich-Vinberg theorem for potential algebras we give a lower estimate, which implies the statements of the main theorem above.

\begin{lemma}\label{abc1} Let $n,k\in\N$, $n\geq 2$, $m\geq k\geq 3$ and $(n,k)\neq (2,3)$, $F\in {\cal P}_{n,k}^{(m)}$ and $A=A_F$. Then $P_{A}\geq (1-t)^{-1}(1-nt+nt^{k-1}-t^k)^{-1}$. \end{lemma}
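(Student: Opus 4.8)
The plan is to deduce the estimate for arbitrary $F$ from the generic case that Lemma~\ref{abc} already controls. First I would record the precise meaning of $P_A$: since $\dim A_F^{(j)}=\sum_{i\leq j}\dim(\bar A_F)_i$, one has $P_{A_F}=(1-t)^{-1}H_{\bar A_F}$, so it is enough to prove $H_{\bar A_F}\geq(1-nt+nt^{k-1}-t^k)^{-1}$ coefficientwise. Next I would set up a semicontinuity argument: for each fixed $j$, the span in degrees $\leq j$ of all two-sided multiples $w\,(\partial_iF)\,w'$ is the column space of a matrix whose entries are linear in the coefficients of $F$, so $\dim A_F^{(j)}=\dim\K\langle X\rangle_{\leq j}-\rk$ of that matrix. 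As rank is lower semicontinuous, $\dim A_F^{(j)}$ is upper semicontinuous and attains its minimum on a Zariski-dense open set; intersecting that open set (degree by degree, so that only finitely many conditions are used at each stage) with the dense open locus where Lemma~\ref{abc} holds shows $\dim A_F^{(j)}\geq\dim A_{F'}^{(j)}$ for some $F'$ satisfying Lemma~\ref{abc}. Thus it suffices to establish the bound for such generic $F'$.

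For generic $F'$ I would invoke the exact potential complex of free left $\bar A$-modules
$$\bar A\xrightarrow{\ d_3\ }\bar A^{\,n}\xrightarrow{\ d_2\ }\bar A^{\,n}\xrightarrow{\ d_1\ }\bar A\xrightarrow{\ \epsilon\ }\K\to 0,$$
in which $\epsilon$ is the augmentation, $d_1$ is multiplication by the generators (so $\im d_1$ is the augmentation ideal $W$), $d_2$ is built from the second cyclic derivatives of $F'$, and $d_3$ encodes the cyclic Euler identity among the relations $\partial_iF'$, its first coordinate being left multiplication by $x_1$. The decisive point is that $d_3$ is injective: because $x_1a$ is one coordinate of $d_3(a)$, Lemma~\ref{abc} (no nonzero $a$ has $x_1a=0$ in $\bar A_{F'}$) forces $\ker d_3=0$, so $d_3$ embeds $\bar A(-k)$ as a free rank-one submodule of $\ker d_2$.

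With this the Golod--Shafarevich--Vinberg count goes through. Writing $h_d=\dim(\bar A)_d$ and using $\im d_1=W$ together with exactness at the copy of $\bar A^{\,n}$ in the domain of $d_1$, I get $h_d=n\,h_{d-1}-\dim(\im d_2)_d=n\,h_{d-1}-n\,h_{d-(k-1)}+\dim(\ker d_2)_d$ for $d\geq1$; injectivity of $d_3$ then gives $\dim(\ker d_2)_d\geq\dim\bar A(-k)_d=h_{d-k}$, whence
$$h_d-n\,h_{d-1}+n\,h_{d-(k-1)}-h_{d-k}\geq 0\qquad(d\geq1).$$
Packaged into generating functions this reads $H_{\bar A}(t)\,(1-nt+nt^{k-1}-t^k)\geq 1$ coefficientwise; since the coefficients of $(1-nt+nt^{k-1}-t^k)^{-1}$ are nonnegative in the admissible range $n\geq2$, $k\geq3$, $(n,k)\neq(2,3)$ (an elementary check, which is also what drives the growth dichotomy of Theorem~\ref{main-growth}), multiplying through yields $H_{\bar A}\geq(1-nt+nt^{k-1}-t^k)^{-1}$, and therefore $P_A=(1-t)^{-1}H_{\bar A}\geq(1-t)^{-1}(1-nt+nt^{k-1}-t^k)^{-1}$.

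The main obstacle is the non-homogeneous (Vinberg) bookkeeping. Because $F\in{\cal P}_{n,k}^{(m)}$ is not homogeneous, the relations $\partial_iF$ carry higher-degree tails, so the displayed complex cannot be read as a graded resolution and must be handled through the truncation filtration defining $\bar A$; the work is to check that passing to the completion and to the truncated/associated-graded modules preserves both the exactness used in the count and the \emph{freeness} of $\im d_3$. This is precisely why Lemma~\ref{abc} is stated for $\bar A_{F'}$ rather than for a leading form: the non-zero-divisor property must survive in the completion so that $d_3$ stays injective after truncation. The only other point needing care is the compatibility of the two notions of genericity, which I resolve degree by degree as indicated above so that at each $j$ only finitely many dense open conditions are intersected.
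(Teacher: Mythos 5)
Your overall strategy --- reduce to generic $F$ by upper semicontinuity of $\dim A_F^{(j)}$, then feed the injectivity of left multiplication by $x_1$ (Lemma~\ref{abc}) into a Golod--Shafarevich--Vinberg count so as to gain the extra term $h_{d-k}$ coming from the syzygy $\sum[x_j,r_j]=0$ --- is exactly the strategy of the paper, and your semicontinuity reduction matches the paper's device of setting $a_j=\min\{\dim A_G^{(j)}\}$ and comparing with the examples that realize $(1-nt+nt^{k-1}-t^k)^{-1}$. However, the way you execute the count has a genuine gap, and it sits precisely at the point this lemma exists to handle. Your Euler-characteristic computation, in particular the identities $\dim(\im d_2)_d=n\,h_{d-(k-1)}-\dim(\ker d_2)_d$ and $\dim(\im d_3)_d=h_{d-k}$, presupposes that $d_2$ and $d_3$ are \emph{graded} maps of degrees $k-1$ and $k$ acting on a graded algebra. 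For $F\in{\cal P}_{n,k}^{(m)}$ with $m>k$ the relations $\delta_{x_i}F$ have components in degrees $k-1$ through $m-1$, $\bar A_F$ is only filtered, and none of these displayed equalities is meaningful as written. You also need the equality $\ker d_1=\im d_2$ (not merely the inclusion $\im d_2\subseteq\ker d_1$) to convert the count into a \emph{lower} bound on $h_d$; this is standard for a minimal graded presentation but is asserted, not proved, for the completion of a non-graded algebra, where elements of $\bar I$ are limits rather than finite combinations $\sum u_jr_{s(j)}v_j$. You correctly identify all of this as ``the main obstacle'' and ``the work to be checked,'' but you do not do that work --- and that work is the content of the lemma.

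The paper's proof avoids every one of these issues by never invoking exactness of a complex. It works with the truncation projections $\pi^{(j)}$ and the single spanning-set identity $I=VI+R\Phi$, which yields $\pi^{(j+1)}(I)=V\pi^{(j)}(I)+\pi^{(j+1)}(RB^+_{j+2-k})$ with all inequalities automatically in the useful direction ($\dim\pi^{(j+1)}(I)\leq\cdots$ gives $a_{j+1}\geq\cdots$). Lemma~\ref{abc} enters not as injectivity of a differential but combinatorially: it permits the construction of nested monomial bases $N_j$ of the truncations with $M_{j+1}\supseteq x_1M_j$, so that $\dim B^{++}_j=a_j-a_{j-1}$; the syzygy $\sum[x_j,r_j]=0$ then lets one replace $r_1B^+_{j+2-k}$ by $r_1B^{++}_{j+2-k}$ modulo $V\pi^{(j)}(I)$, which is exactly the saving of one shifted copy of the algebra that your $\im d_3$ was meant to provide. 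The resulting recursion $a_{j+1}\geq na_j-na_{j+2-k}+a_{j+1-k}-1$ is then compared term by term with the recursion for the Taylor coefficients of $(1-t)^{-1}(1-nt+nt^{k-1}-t^k)^{-1}$. If you want to salvage your route through the potential complex, you would have to either pass to an associated graded algebra (and then control how the leading terms of the non-homogeneous relations interact, which is delicate when cancellation occurs) or redo the homological algebra filtration-degree by filtration-degree --- at which point you would essentially be reconstructing the paper's truncation argument.
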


Proof given here is  a little more detailed version of the argument appeared in \cite{arxP}.

\section{Preliminary homogeneous results}

We will need two examples.
We say that potential algebra is {\it exact} if associated potential complex is exact \cite{IA}.

\begin{example}\label{potex} Let $n$ and $k$ be integers such that $k\geq n\geq 2$, $k\geq 3$ and $(n,k)\neq (2,3)$. Consider the potential $F\in{\cal P}_{n,k}$ given by
$$
F=\sum_{\sigma\in S_{n-1}} x_n^{k-n+1}x_{\sigma(1)}\dots x_{\sigma(n-1)}^\circlearrowleft,
$$
where the sum is taken over all bijections from the set $\{1,\dots,n-1\}$ to itself.
Then the potential algebra $\bar A_F=A_F$ is exact. Furthermore, $x_1u\neq 0$ for every non-zero $u\in A$.
\end{example}

%\begin{proof} We order the generators by $x_n>x_{n-1}>{\dots}>x_1$ and equip monomials %with the left-to-right degree-lexicographical ordering. The leading monomials of the %defining relations of $A$ are easily seen to be $m_n=x_n^{k-n}x_{n-1}\dots x_1$, and %$m_j=x_n^{k-n+1}x_{n-1}\dots x_{j+1}x_{j-1}\dots x_1$ with $1\leq j\leq n-1$ (after %$x_n^{k-n+1}$  we have all other $x_k$ in descending order with $x_j$ missing). There %is just one overlap of these monomials $x_n^{k-n+1}x_{n-1}\dots x_1=x_nm_n=m_1x_1$. By %Lemma~\ref{sy}, it happily resolves and the defining relations themselves form a %(finite) reduced Gr\"obner basis in the ideal of relations of $A$. This allows to %confirm that $H_A=(1-nt+nt^{k-1}-t^k)^{-1}$. Since no leading monomial of the elements %of the Gr\"obner basis starts with $x_1$, the map $u\mapsto x_1u$ from $A$ to $A$ is %injective. Hence $A$ has no non-trivial right annihilators and therefore $A$ is exact %according to Lemma~\ref{commin}.
%\end{proof}

\begin{example}\label{potex1} Let $n$ and $k$ be integers such that $n>k\geq 3$. Order the generators by $x_n>x_{n-1}>{\dots}>x_1$ and consider the left-to-right degree-lexicographical ordering on the monomials. Consider the set $M$ of degree $k-2$ monomials in $x_1,\dots,x_{n-1}$ in which each letter $x_j$ features at most once. Let $m_1,\dots,m_{n-1}$ be the top $n-1$ monomials in $M$ enumerated in such a way that $m_{n-k+1}=x_{n-1}\dots x_{n-k+2}$ $($the biggest one$)$. Now define the potential  $F\in{\cal P}_{n,k}$  by
$$
F=x_nx_{n-1}\dots x_{n-k+1}^\circlearrowleft+\sum_{1\leq j\leq n-1\atop j\neq n-k+1} x_jx_nm_j^\circlearrowleft.
$$
Then the potential algebra $A=A_F$ is exact.  Furthermore, $x_1u\neq 0$ for every non-zero $u\in A$.
\end{example}

%\begin{proof} We use the same order as above. The leading monomials of the defining %relations of $A$ are easily seen to be $x_{n-1}\dots x_{n-k+1}$ and $x_nm_j$ for $1\leq %j\leq n-1$. Again, there is just one overlap of these monomials
%$x_nx_{n-1}\dots x_{n-k+1}=x_n(x_{n-1}\dots x_{n-k+1})=(x_nx_{n-1}\dots %x_{n-k+2})x_{n-k+1}$, which happens to resolve according to Lemma~\ref{sy}. The rest of %the proof is the same as for the previous example.
%\end{proof}

\begin{lemma}\label{geni} Let $\K$ be uncountable field, $n,k\in\N$, $n\geq 2$, $k\geq 3$ and $(n,k)\neq (2,3)$. Then for a generic $F\in{\cal P}_{n,k}$, $x_1a\neq 0$ in $A_F$ for every non-zero $a\in A_F$.
\end{lemma}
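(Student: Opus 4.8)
The plan is to establish $x_1$-injectivity as a generic condition, in the sense that it holds off a countable union of proper Zariski-closed subsets of the finite-dimensional space ${\cal P}_{n,k}$, and to exploit the explicit examples which realize it. First I would fix, for each degree $d$, the graded multiplication map $L_{x_1}\colon (A_F)_d\to (A_F)_{d+1}$ and its nullity $\kappa_d(F)=\dim\ker L_{x_1}|_{(A_F)_d}$, so that injectivity of $x_1$ on $A_F$ is exactly the requirement $\kappa_d(F)=0$ for all $d$. Writing $V_d$ for the degree-$d$ part of the free algebra $\K\langle x_1,\dots,x_n\rangle$ and $(I_F)_d$ for the degree-$d$ part of the relation ideal (spanned by the translates $w\,(\partial_iF)\,w'$ of the cyclic derivatives, which depend linearly on the coefficients of $F$), I note that $\dim(A_F)_d=n^d-\dim(I_F)_d$ is $n^d$ minus the rank of a matrix whose entries are linear in $F$. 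Hence $\dim(A_F)_d$ attains its minimum $h_d$ over ${\cal P}_{n,k}$ on a non-empty Zariski-open set.

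The subtlety is that $\kappa_d$ is not manifestly semicontinuous, since both the source and the target of $L_{x_1}$ vary with $F$. To get around this I would pass to the left-module quotient $B_F=A_F/x_1A_F$, whose degree-$d$ component is $V_d/(x_1V_{d-1}+(I_F)_d)$; here $\dim(B_F)_d=n^d-\dim(x_1V_{d-1}+(I_F)_d)$ is again $n^d$ minus a rank linear in $F$, so it attains its minimum $b_d$ on a non-empty open set. The exact sequence $(A_F)_{d-1}\xrightarrow{x_1}(A_F)_d\to (B_F)_d\to 0$ yields the identity $\dim(B_F)_d=\dim(A_F)_d-\dim(A_F)_{d-1}+\kappa_{d-1}(F)$, which is the bridge between the two cleanly semicontinuous quantities and the nullity I want to kill.

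Now I would bring in the examples. By Example \ref{potex} when $k\geq n$ and Example \ref{potex1} when $n>k$ (together covering every admissible pair with $n\geq2$, $k\geq3$, $(n,k)\neq(2,3)$), there is a specific potential $F_0\in{\cal P}_{n,k}$ which is exact and for which $x_1$ is injective. Exactness forces the Hilbert series of $A_{F_0}$ to equal the Golod--Shafarevich--Vinberg value $(1-nt+nt^{k-1}-t^k)^{-1}$, which is the universal lower bound for $H_{A_F}$ in the homogeneous case; consequently $\dim(A_{F_0})_d=h_d$ for all $d$, so $F_0$ simultaneously minimizes the whole family $\dim(A_F)_d$. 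Since $\kappa_{d-1}(F_0)=0$, the bridge identity gives $b_d\leq\dim(B_{F_0})_d=h_d-h_{d-1}$.

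Finally I would intersect. Over the uncountable field $\K$ the complement of the countably many open loci $\{\dim(A_F)_d=h_d\}$ and $\{\dim(B_F)_d=b_d\}$ is a countable union of proper closed subsets, hence a proper subset of ${\cal P}_{n,k}$; a generic $F$ lies in all of them. For such $F$ the bridge identity reads $b_d=\dim(B_F)_d=h_d-h_{d-1}+\kappa_{d-1}(F)$, and together with $b_d\leq h_d-h_{d-1}$ and $\kappa_{d-1}(F)\geq0$ this forces $\kappa_{d-1}(F)=0$ for every $d$, i.e. $x_1$ is injective on $A_F$. The main obstacle is exactly the one handled in the middle step: injectivity of $x_1$ is not an obviously open condition, and turning it into one requires the detour through $B_F$ together with the fact, supplied by exactness of the example, that the benchmark $F_0$ minimizes the entire Hilbert function rather than merely being a point where $x_1$ happens to be injective.
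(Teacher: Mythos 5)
Your proof is correct and follows essentially the same route as the paper: use the exact potential of Example~\ref{potex} or~\ref{potex1} as a benchmark realizing the minimal Hilbert series, invoke generic minimality of the Hilbert function over the uncountable field $\K$, and transfer injectivity of left multiplication by $x_1$ to generic $F$ by semicontinuity. The only difference is that where the paper cites Lemma~3.9 of \cite{IA} for the semicontinuity of $\dim x_1(A_F)_j$, you prove it directly by passing to $A_F/x_1A_F$, whose graded dimensions are of the form $n^d$ minus a rank depending polynomially on $F$ and are therefore generically minimal; this makes the step self-contained but does not change the argument.
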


\begin{proof} Let $F_0$ be the potential provided by the appropriate (depending on whether $k\geq n$ or $k<n$) Example~\ref{potex} or Example~\ref{potex1}. Then $x_1a\neq 0$ in $A_{F_0}$ for every non-zero $a\in A_{F_0}$ and $H_{A_{F_0}}=(1-nt+nt^{k-1}-t^k)^{-1}$.
%Lemma~\ref{commin1} guarantees
As was noticed by Ufnarovskij, the generic Hilbert series is minimal, hence $H_{A_F}=(1-nt+nt^{k-1}-t^k)^{-1}$ for generic $F\in{\cal P}_{n,k}$. Applying %Lemma~\ref{maxra}
Lemma~3.9 from \cite{IA}
%[Iyudu,Smoktunowicz]
to the map $a\mapsto x_1a$ from $A_F$ to $A_F$, we now see that $\dim x_1(A_F)_j\geq \dim x_1(A_{F_0})_j$ for all $j$ for generic $F\in{\cal P}_{n,k}$. Since $\dim x_1(A_{F_0})_j=\dim (A_{F_0})_j=\dim (A_{F})_j$ for generic $F$, the map $a\mapsto x_1a$ from $A_F$ to itself is injective for generic $F$.
\end{proof}

\section{Main Lemma about zero divisors}

\begin{lemma}\label{abc} Let $n,k\in\N$, $n\geq 2$, $m\geq k\geq 3$ and $(n,k)\neq (2,3)$, then for generic  $F\in {\cal P}_{n,k}^{(m)}$,
%. Assume also that
 $x_1a\neq 0$ in $\bar A_{F}$ for every non-zero $a\in \bar A_{F}$.
 %Then for each $j\in\Z_+$, $x_1b\neq 0$ in $A_F^{(j+1)}$ for every $b\in \K\langle x_1,\dots,x_n\rangle$ such that %$b\neq 0$ in $A_F^{(j)}$.
\end{lemma}

\begin{proof} Assume the contrary. Then there exist $j\in\Z_+$ and $a\in \K\langle x_1,\dots,x_n\rangle$ such that $a\neq 0$ in $A_F^{(j)}$ and $x_1a=0$ in $A_F^{(j+1)}$. The latter means that
\begin{equation*}%\label{tata1}
x_1a=\sum_{j\in N} u_jr_{s(j)}v_j\,({\rm mod}\,\,J^{(j+1)}),
\end{equation*}
where $r_j=\delta_{x_j}F$, $N$ is a finite set, $s$ is a map from $N$ to $\{1,\dots n\}$, $u_j,v_j$ are non-zero homogeneous elements of $\K\langle x_1,\dots,x_n\rangle$ such that the degree of each $u_jv_j$ does not exceed $j-k+2$, and the equality $f=g\,({\rm mod}\,\,J)$ means $f-g\in J$. Let $m$ be the lowest degree of $u_jv_j$ and $N'=\{j\in N:\deg u_jv_j=m\}$. Then the smallest degree part of the above display reads
$$
x_1a_{m+k-2}=\sum_{j\in N'} u_j\rho_{s(j)}v_j\ \ \text{in $\K\langle x_1,\dots,x_n\rangle$,}
$$
where $\rho_j=\delta_{x_j}F_k$. Note that automatically $a_q=0$ for $q<m+k-2$.

Consider two cases, depending on whether  the  left hand side of the lower degree term is zero or not.

{\it Case I.} If $x_1 a_{m+k-2}=0$, then we have an equality
$$ 0=\sum u_j\rho_{s(j)}v_j,$$
which means that it is a syzygy of  $\rho_i$th.
We will use the following fact.

 {\bf Definition.} We say that $F\in \K^{\rm cyc}\langle x_1,\dots,x_n\rangle$ is {\it S-trivial} if the module of syzygies of $A_F$ presented by generators $x_1,\dots,x_n$ and relations $r_1,\dots,r_n$ with $r_j=\delta_{x_j}F$ is generated by trivial syzygies and the syzygy $\sum [x_j,{r}_j]$.
 %provided by Lemma~\ref{gen1}.
%\end{proof}[]

\begin{proposition} Any  homogeneous potential in general position  is $S$-trivial.
\end{proposition}

\begin{proof}
 First, we observe that
 %if $G\in {\cal P}_{n,m}$ and $H_{A_G}=(1-nt+nt^{m-1}-t^m)^{-1}$, then $G$ is $S$-trivial. Indeed, otherwise
 any 'extra' syzygy will 'drop' the dimension of the corresponding component of the ideal of relations thus increasing the dimension of the component of the algebra compared to the minimal Hilbert series.
 This follows from the fact that the module of
  syzygies of relations
  is generated by polynomials obtained from the  resolutions of ambiguities of the Gr\"obner basis \cite{Gr}.

 Thus, if Hilbert series is minimal, all syzygies must be generated by one syzygy.
 Now we remind the fact observed, for example, by Ufnarovskij \cite{ufn}, that generic series is minimal.
\end{proof}

Due to the above proposition we have
$$
\sum_{j\in N} u_j\rho_{s(j)}v_j\,=\sum_k \alpha_k\sum_i[\rho_i,x_i]\, \beta_k.$$

 We can rewrite the initial equality
  \begin{equation*}%\label{tata1}
x_1a=\sum_{j\in N} u_jr_{s(j)}v_j\,({\rm mod}\,\,J^{(j+1)}),
\end{equation*}
 adding the appropriate combination of syzygies (which is zero) to the right hand side:
 $$x_1 a =\sum u_j r_{s(j)} v_j -\sum \alpha_k \sum_i [r_i,x_i] \beta_k \,\,(mod J^{(j+1)}).$$

 Now the lowest term of the right hand side become bigger. Indeed, the lowest terms cancel, since
 the lowest degree term of
$ \sum_{j\in N} u_j r_{s(j)}v_j$ coincides with  the lowest degree term of
$\sum_{j\in N} u_j\rho_{s(j)}v_j$, and
the lowest degree term of  $\sum \alpha_k[r_i,x_i] \beta_k$    coincides with
 the lowest degree term  of $\sum \alpha_k[\rho_i,x_i] \beta_k$.

Now for newly rewritten equality we again write down the lowest terms of the left and right hand side.
If for the left hand side it is again zero: $x_1 a_{m'+k-2}=0,$ we continue the process of lifting up the lowest degree on the right hand side, as described in Case I.

%The condition imposed upon $A_{F_k}$ means that the ideal $K$ generated by $\rho_1,\dots,\rho_n$ satisfies $x_1b\in K\,\Longrightarrow b\in K$. Hence, by the above display,

{\it Case II}.
Otherwise we are in Case II, when the left hand side
$x_1 a_{m'+k-2}\neq 0$. The lowest term is from the left hand side now.  We will find a presentation, where the lower degree of the left hand side is bigger.

 Due to the fact that the statement of our lemma~\ref{abc} holds true in the case of homogeneous potential, which is proved in Lemma~\ref{geni}, we can deduce that
$$
a_{m'+k-2}=\sum_{p\in M} f_p\rho_{t(p)}g_p,
$$
where $M$ is a finite set, $t$ is a map from $M$ to $\{1,\dots n\}$, $f_p,g_p$ are non-zero homogeneous elements of $\K\langle x_1,\dots,x_n\rangle$, such that the degree of each $f_pg_p$ is $m-1$. Now we replace $a$ by
$$
a'=a-\sum_{p\in M} f_pr_{t(p)}g_p.
$$
Note that $a=a'$ in $\bar A_F$ and therefore $a=a'$ in $A_F^{(j)}$ and $x_1a=x_1a'$ in $A_F^{(j+1)}$. So $a'$ satisfies the same properties as $a$ with the only essential difference being that $a'_{m'+k-2}=0$.

Now we can repeat the process chipping off the homogeneous degree-components of left and right hand sides of equality from bottom up one by one until at the final step we arrive to a contradiction with $a\neq 0$ in $A_F^{(j)}$.
\end{proof}

\section{Proof of the main estimate using Golod-Shafarevich-Vinberg type argument}

%Let us introduce some necessary  denotations.
%According to Lemma~\ref{abc}, we then have that for each $j\in\Z_+$, $x_1b\neq 0$ in $B^{(j+1)}$ for every $b\in %\K\langle x_1,\dots,x_n\rangle$ such that $b\neq 0$ in $B^{(j)}$.
%This property allows us to pick inductively (starting with $M_0=\{1\}$) sets $M_j$ of monomials of degree $j$ such %that $M_{j+1}\supseteq x_1M_j$ and $N_j=M_0\cup{\dots}\cup M_j$ is a linear basis in $B^{(j)}$ for each $j\in\Z_+$. %For every $j$, let $B^+_j$ be the linear span of $N_j$ and $B^{++}_j$ be the linear span of $N_j\setminus %x_1N_{j-1}$ in $\K\langle x_1,\dots,x_n\rangle$.

\begin{lemma}\label{abc1} Let $n,k\in\N$, $n\geq 2$, $m\geq k\geq 3$ and $(n,k)\neq (2,3)$, $F\in {\cal P}_{n,k}^{(m)}$ and $A=\bar A_F$. Then $P_{A}\geq (1-t)^{-1}(1-nt+nt^{k-1}-t^k)^{-1}$. \end{lemma}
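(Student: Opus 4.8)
The plan is to run a Golod--Shafarevich--Vinberg type counting argument, using the zero-divisor information from Lemma~\ref{abc} as the crucial input that lets us improve the generic estimate. Since $A=\bar A_F$, the generating function $P_A=\sum\dim A^{(n)}t^n$ is exactly the Hilbert series of the completion, so it suffices to bound $\dim A^{(n)}$ from below for each $n$. I would set $B=\K\langle X\rangle$ and view $\bar A_F$ as $B$ (completed) modulo the closed ideal generated by the $n$ relations $r_j=\delta_{x_j}F$, each of which has lowest degree $k-1$. The classical Golod--Shafarevich--Vinberg inequality for an algebra on $n$ generators with relations of degrees $\geq k-1$ gives, componentwise,
$$
H_{\bar A_F}\geq (1-nt+nt^{k-1})^{-1},
$$
but the target bound $(1-t)^{-1}(1-nt+nt^{k-1}-t^k)^{-1}$ is strictly stronger, and the extra factors $-t^k$ and $(1-t)^{-1}$ are precisely what the potential structure and the zero-divisor lemma should buy us.

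The key structural point is that the relations are not independent: they satisfy the canonical potential syzygy $\sum_j[x_j,r_j]=0$, which in degree terms corresponds to the extra $+t^k$ correction one expects for a potential algebra (this is the analogue of the Golod--Shafarevich estimate for a presentation with one relation among the relations). First I would make this precise by writing down the truncated ``potential complex''
$$
\bar A_F \xrightarrow{\ d_2\ } \bar A_F^{\,n} \xrightarrow{\ d_1\ } \bar A_F^{\,n} \xrightarrow{\ d_0\ } \bar A_F,
$$
where $d_0$ is multiplication by the column $(x_1,\dots,x_n)$, $d_1$ is the Jacobian-type matrix built from the relations $r_j$, and $d_2$ encodes the tautological syzygy $\sum_j[x_j,r_j]$. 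Taking Euler characteristics of the graded pieces of this complex converts exactness (or a defect thereof) into an equality/inequality of Hilbert series. Concretely, if the complex were exact one would read off $H_{\bar A_F}\cdot(1-nt+nt^{k-1}-t^k)=1$, i.e.\ equality in the generic bound $(1-nt+nt^{k-1}-t^k)^{-1}$; since in general the complex need only be a complex, the correct direction of the inequality follows from the fact that homology can only \emph{increase} the left-hand dimensions. The remaining factor $(1-t)^{-1}$ is exactly the contribution of the statement ``$x_1$ is not a zero divisor''.

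This is where Lemma~\ref{abc} enters and is, I expect, the main obstacle to state cleanly. Lemma~\ref{abc} guarantees that for generic $F$ the map $a\mapsto x_1a$ is injective on $\bar A_F$, so multiplication by $x_1$ gives a short exact sequence
$$
0\to \bar A_F \xrightarrow{\ x_1\cdot\ } \bar A_F \to \bar A_F/x_1\bar A_F \to 0,
$$
whose Hilbert-series identity reads $H_{\bar A_F/x_1\bar A_F}=(1-t)\,H_{\bar A_F}$. Combining the Golod--Shafarevich--Vinberg estimate for the quotient $\bar A_F/x_1\bar A_F$ (which is again a potential-type algebra, now effectively on one fewer ``free'' direction, contributing the sharpened denominator $1-nt+nt^{k-1}-t^k$) with this exact sequence yields
$$
H_{\bar A_F}=(1-t)^{-1}H_{\bar A_F/x_1\bar A_F}\geq (1-t)^{-1}(1-nt+nt^{k-1}-t^k)^{-1},
$$
which is the claim. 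The delicate steps are two: first, justifying that the inequality in the Golod--Shafarevich step survives truncation and completion, i.e.\ that passing to $A^{(n)}$ and then taking the limit preserves the componentwise bound (this is routine but must be tracked via the $J^{(n)}$ filtration used throughout Lemma~\ref{abc}); and second, checking that the non-zero-divisor property of $x_1$ is exactly what removes the slack and upgrades the generic inequality to include the extra $(1-t)^{-1}$ factor. The genericity from Lemma~\ref{abc} is essential precisely because it forces the potential complex to have minimal homology, so the Euler-characteristic estimate is as tight as possible; I would therefore organise the proof so that the only nontrivial analytic input is the injectivity of $x_1\cdot$, with everything else reduced to manipulation of formal power series and the standard Golod--Shafarevich--Vinberg counting over the degree filtration.
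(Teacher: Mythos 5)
Your proposal assembles the right ingredients (a Golod--Shafarevich--Vinberg count, the canonical syzygy $\sum_j[x_j,r_j]=0$, and the injectivity of left multiplication by $x_1$ from Lemma~\ref{abc}), but the mechanism you propose for extracting the extra factor $(1-t)^{-1}$ does not work. The object $\bar A_F/x_1\bar A_F$ is the quotient by a one-sided ideal, hence only a cyclic right module over $\bar A_F$, not a potential algebra ``on one fewer free direction''; no Golod--Shafarevich--Vinberg estimate of the form $(1-nt+nt^{k-1}-t^k)^{-1}$ applies to it, and that bound is exactly what your final chain of inequalities needs. Nor can you recover it from the bound on $\bar A_F$ itself: from $H_{\bar A_F}\geq(1-nt+nt^{k-1}-t^k)^{-1}$ and $H_{\bar A_F/x_1\bar A_F}=(1-t)H_{\bar A_F}$ you cannot conclude anything useful, because multiplying a componentwise inequality of power series by $1-t$ (which has a negative coefficient) does not preserve it. The paper's proof uses the injectivity of $x_1$ in a different and essential place: inside the count of the ideal of relations. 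One builds bases $N_j$ of the truncations $B^{(j)}$ with $x_1N_{j-1}\subseteq N_j$ (this is exactly where Lemma~\ref{abc} enters), and then uses the identity $r_1x_1=\sum_j x_jr_j-\sum_{j\geq 2}r_jx_j$ to show that in the recursion $\pi^{(j+1)}(I)=V\pi^{(j)}(I)+\pi^{(j+1)}(RB^+_{j+2-k})$ the summand $r_1B^+_{j+2-k}$ may be replaced by $r_1B^{++}_{j+2-k}$, the span of $N_{j+2-k}\setminus x_1N_{j+1-k}$. Since $\dim B^{++}_s=a_s-a_{s-1}$ precisely because $x_1$ acts injectively, this saves $a_{j+1-k}$ dimensions and yields the recurrence $a_{j+1}\geq na_j-na_{j+2-k}+a_{j+1-k}+1$, which is then matched against the identical recurrence for the Taylor coefficients of $(1-t)^{-1}(1-nt+nt^{k-1}-t^k)^{-1}$. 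Your Euler-characteristic reading of the potential complex only ever produces the factor $(1-nt+nt^{k-1}-t^k)^{-1}$ without the $(1-t)^{-1}$.

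A second, smaller gap: Lemma~\ref{abc} gives the injectivity of $x_1\cdot$ only for \emph{generic} $F\in{\cal P}_{n,k}^{(m)}$, while the conclusion of Lemma~\ref{abc1} is asserted for every $F$ in that space. You apply the injectivity to the given $F$ as if it always held. The paper bridges this by setting $a_j=\min\{\dim A_G^{(j)}:G\in{\cal P}_{n,k}^{(m)}\}$, noting that $P_{A_F}\geq\sum a_jt^j$ for the arbitrary $F$ by definition of the minimum, and then running the entire counting argument for a single generic $G$ that simultaneously attains the minimum and enjoys the non-zero-divisor property. Some such reduction is needed in your write-up as well; without it the argument only proves the estimate for generic potentials.
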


\begin{proof} First, observe that exchanging the ground field $\K$ for a field extension does not affect the series $P_A$. Thus we can without loss of generality assume that $\K$ is uncountable. For $j\in\Z_+$, let $b_j$ be Taylor coefficients of the rational function $Q(t)=(1-t)^{-1}(1-nt+nt^{k-1}-t^k)^{-1}$ (that is, $Q(t)=\sum b_jt^j$) and $a_j=\min\{\dim A_G^{(j)}:G\in {\cal P}_{n,k}^{(m)}\}$. The proof will be complete if we show that $a_j=b_j$ for all $j\in\Z_+$. Denote $P=\sum a_jt^j$. First, note that Examples~\ref{potex} and~\ref{potex1}, provide $G\in {\cal P}_{n,k}\subseteq {\cal P}_{n,k}^{(m)}$ for which $H_G=(1-nt+nt^{k-1}-t^k)^{-1}$. It immediately follows that $P_G=Q$. By definition of $P$ (minimality of $a_j$), we then have $P\leq Q$, that is, $a_j\leq b_j$ for all $j\in\Z_+$.

%By Lemmas~\ref{miser1} and~\ref{geni},
 For a generic $G\in {\cal P}_{n,k}^{(m)}$ $P_{A_G}=P$ according to nonhomogeneous generalisation of Ufnarovskij's observation. Moreover, by lemma~\ref{abc}, for a generic $G\in {\cal P}_{n,k}^{(m)}$, we have that
   $x_1a\neq 0$ in $\bar A_G$ for every non-zero $a\in \bar A_{G}$. In particular, we can pick a single $G\in {\cal P}_{n,k}^{(m)}$ such that for $B=\bar A_G$, $P_B=P$ and $x_1a\neq 0$ in $\bar A_{G}$ for every non-zero $a\in \bar A_{G}$.

    According to Lemma~\ref{abc}, we then have that for each $j\in\Z_+$, $x_1b\neq 0$ in $B^{(j+1)}$ for every $b\in \K\langle x_1,\dots,x_n\rangle$ such that $b\neq 0$ in $B^{(j)}$.
This property allows us to pick inductively (starting with $M_0=\{1\}$) sets $M_j$ of monomials of degree $j$ such that $M_{j+1}\supseteq x_1M_j$ and $N_j=M_0\cup{\dots}\cup M_j$ is a linear basis in $B^{(j)}$ for each $j\in\Z_+$. For every $j$, let $B^+_j$ be the linear span of $N_j$ and $B^{++}_j$ be the linear span of $N_j\setminus x_1N_{j-1}$ in $\K\langle x_1,\dots,x_n\rangle$. Clearly $P_B=\sum (\dim B^+_j)t^j$ and therefore $a_j=\dim B^+_j$ for all $j\in\Z_+$.

Let also $\pi^{(j)}$ be the natural projection of $\K\langle x_1,\dots,x_n\rangle$ onto the linear span of monomials of length $\leq j$ along $J^{(j)}$ (in fact, $\pi^{(j)}: \K[X]\to\K[X]^{(j)}$). As usual, let $V$ be the linear span of $x_1,\dots,x_n$, $r_j=\delta_{x_j}G$, $R$ be the linear span of $r_1,\dots,r_n$ and $I$ be the ideal generated by $r_1,\dots,r_n$ (=the ideal of relations of $B$). For the sake of brevity denote $\Phi=\K\langle x_1,\dots,x_n\rangle$.

Now we argue in a way similar to the Golod-Shafarevich-Vinberg theorem \ref{Golod, Vinb}, but incorporating at some point the syzygy $\sum [x_j,r_j]=0$ which holds for any potential algebra.

Obviously,
$I=VI+R\Phi$. Then $\pi^{(j+1)}(I)=V\pi^{(j)}(I)+\pi^{(j+1)}(R\Phi)$ for every $j\in\Z_+$. Using the definition of $B_j^+$ and the fact that each $r_j$ starts at degree $\geq k-1$, we obtain
$$
\pi^{(j+1)}(I)=V\pi^{(j)}(I)+\pi^{(j+1)}(RB^+_{j+2-k}).
$$
Since $\sum[x_j,r_j]=0$ in $\Phi$, we can get rid of $r_1x_1$:
$$
V\pi^{(j)}(I)+\pi^{(j+1)}(RB^+_{j+2-k})=V\pi^{(j)}(I)+\pi^{(j+1)}(R'B^+_{j+2-k}+r_1B^{++}_{j+2-k})
$$
where $R'$ is the linear span of $r_2,\dots,r_n$. Thus
$$
\pi^{(j+1)}(I)=V\pi^{(j)}(I)+\pi^{(j+1)}(R'B^+_{j+2-k}+r_1B^{++}_{j+2-k}).
$$
Hence
$$
\begin{array}{l}
\dim \pi^{(j+1)}(I)\leq \dim V\pi^{(j)}(I)+\dim R'B^+_{j+2-k}+\dim r_1B^{++}_{j+2-k}
\\ \qquad\qquad =n\dim \pi^{(j)}(I)+(n-1)\dim B^+_{j+2-k}+\dim B^{++}_{j+2-k}.
\end{array}
$$
Plugging the equalities $\dim B^+_j=a_j$, $\dim B^{++}_j=a_j-a_{j-1}$  (since multiplication by $x_1$ is injective, we assume also $a_{s}=0$ for $s<0$), and $\dim \pi^{(j)}(I)=1+n+{\dots}+n^j-a_j$ into the inequality in the above display, we get
$$
1+{\dots}+n^{j+1}-a_{j+1}\leq n+{\dots}+n^{j+1}-na_j+na_{j+2-k}-a_{j+1-k}.
$$
Hence $a_{j+1}\geq na_j-na_{j+2-k}+a_{j+1-k}-1$ for $j\in\Z_+$. On the other hand, it is easy to see that the Taylor coefficients $b_j$ of $Q$ satisfy $b_{j+1}=nb_j-nb_{j+2-k}+b_{j+1-k}-1$ for $j\geq k-1$. It is also elementary to verify that $a_j=b_j$ for $0\leq j\leq k-1$. Now for $c_j=b_j-a_j$, we have $c_j=0$ for $0\leq j\leq k-1$, $c_j\geq 0$ for $j\geq k$ and $c_{j+1}\leq nc_j-nc_{j+2-k}+c_{j+1-k}$ for $j\geq k-1$. The only sequence satisfying these conditions is easily seen to be the zero sequence. Hence $a_j=b_j$ for all $j\in\Z_+$, which completes the proof.
\end{proof}

Now Theorem~\ref{main-growth} is a direct consequence of Lemma~\ref{abc1}. Indeed, every potential $F$ on $n$ variables starting in degree $\geq k$ belongs to ${\cal P}_{n,k}^{(m)}$ for $m$ large enough and Lemma~\ref{abc1}  provides at least cubic growth of $\bar A_F$ in the case $(n,k)=(3,3)$ or $(n,k)=(2,4)$ and exponential growth otherwise.

\vfill\break

\small\rm


\begin{thebibliography}{99}

\itemsep=-2pt

\bibitem{WB}  G.~Brown, M.~Wemyss, \it Gopakumar-Vafa invariants do not determine flops, \rm arxiv1707.01150



\bibitem{WD}W.~Donovan, M.~Wemyss, \it Noncommutative deformations and flops, \rm Duke Math. J. {\bf 165} (2016), 1397--1474

\bibitem{Golod} E.S.~Golod, I.R.Shafatevich, \it On class field towers \rm Izv. Acad. Nauk SSSR, {\bf 28} (1964), 261--272.

    \bibitem{arxP}N.~Iyudu and S.~Shkarin, \it Potential algebras with few generators, \rm arxiv:1806.06829, 49p.


\bibitem{IA}N.~Iyudu and A.~Smoktunowicz, \it Golod--Shafarevich type theorems and potential algebras, \rm IMRN,  {\bf 15} (2019),  p.4822--4844


    \bibitem{Gr} T.Mora, \it Seven variations on standard bases, \rm Proceedings (1988)

 \bibitem{Vinb} E. B. Vinberg, \it On the theorem concerning the infinite-dimensionality of an associative algebra, \rm Izv. Akad. Nauk SSSR Ser. Mat. {\bf 29} (1965), 209–-214.

        %Noncommutative deformations and flops, \rm Duke Math. J. {\bf 165} (2016), 1397--1474


    \bibitem{ufn} V.~Ufnarovskij, \it Combinatorial and Asymptotic Methods in Algebra, \rm Encyclopaedia of Mathematical Sciences  {\bf 57}, Editors: A.~Kostrikin and I.~Shafarevich, Berlin, Heidelberg, New York: Springer-Verlag (1995), 1--196

       % \bibitem{Ufn} V. Ufnarovskii, \it Combinatorial and asymptotic methods in algebra (Russian), \rm Current   % problems in mathematics. Fundamental directions 57, 5–177, Itogi Nauki i Tekhniki, Akad. Nauk SSSR, Moscow, % 1990.


\end{thebibliography}
\end{document}